\def\tank#1{\protected@xdef\@thanks{\@thanks
        \protect\footnotetext[0]{#1}}}
\def\bigfoot{

    \@footnotetext}
\newcommand{\ba}{\begin{eqnarray}}
\newcommand{\ea}{\end{eqnarray}}
\newtheorem{thm}{Theorem}[section]
\newtheorem{prop}{Proposition}[section]
\newtheorem{defi}{Definition}[section]
\newtheorem{rmk}{Remark}[section]
\newtheorem{ass}{Assumption}[section]
\newtheorem{exmp}{Example}[section]
\numberwithin{equation}{section}
\newenvironment{proof}{Proof}{\hfill $\Box$}
\def\RR{\mathbb{R}}
\def\PP{\mathbb{P}}
\def\FF{\mathbb{F}}
\def\EE{\mathbb{E}}
\def\NN{\mathbb{N}}
\def\cA{{\mathcal A}}
\def\cB{{\mathcal B}}
\def\cF{{\mathcal F}}
\def\be{{\beta}}
\def\de{{\delta}}
\def\si{{\sigma}}
\def\De{{\Delta}}
\def\Om{{\Omega}}
\def\al{{\alpha}}
\def\be{{\beta}}
\def\de{{\delta}}
\def\De{{\Delta}}
\def\si{{\sigma}}
\def\eps{{\epsilon}}
\def\th{{\theta}}
\def\EE{\mathbb{ E}}
\def\th{{\theta}}
\def\si{{\sigma}}
\def\al{{\alpha}}
\begin{document}

\title{\Large \bf Accessibility of SPDEs driven by pure jump noise and
its applications}
\date{}
\author{{Jian Wang}$^1$\footnote{E-mail:wg1995@mail.ustc.edu.cn}~~~{Hao Yang}$^2$\footnote{E-mail:yanghao@hfut.edu.cn}~~~ {Jianliang Zhai}$^1$\footnote{E-mail:zhaijl@ustc.edu.cn}~~~ {Tusheng Zhang}$^{3}$\footnote{E-mail:Tusheng.Zhang@manchester.ac.uk}
\\
 \small  1. School of Mathematical Sciences,
 \small  University of Science and Technology of China,\\
 \small  Hefei, Anhui 230026, China.\\
 \small 2. School of Mathematics, Hefei University of Technology, Hefei, Anhui 230009, China. \\
 \small  3. Department of Mathematics, University of Manchester,\\
 \small  Oxford Road, Manchester, M13 9PL, UK.
}

\maketitle

\begin{center}
\begin{minipage}{130mm}
{\bf Abstract:}
In this paper, we develop a new method to obtain the accessibility of stochastic partial differential equations driven by additive pure jump noise. An important
novelty of this paper is to allow  the driving noises to be degenerate. As an application, for the first time, we obtain the accessibility of a  class of stochastic equations
driven by pure jump degenerate noise, which cover 2D stochastic Navier-Stokes equations, stochastic Burgers
type equations, singular stochastic $p$-Laplace equations,  stochastic fast diffusion equations, etc. As a further application, we establish the ergodicity of singular stochastic $p$-Laplace equations and stochastic fast diffusion equations driven by additive pure jump noise, and we remark that the driving noises could be
L\'evy processes with heavy tails.

\vspace{3mm} {\bf Keywords:}
 Accessibility; pure jump noise; degenerate; ergodicity; stochastic partial differential equation; 2D stochastic Navier-Stokes equation;
 singular stochastic $p$-Laplace equation; stochastic fast diffusion equation

\vspace{3mm} {\bf AMS Subject Classification (2020):}
60H15; 60G51; 37A25; 60H17.
\end{minipage}
\end{center}


\renewcommand\baselinestretch{1.2}
\setlength{\baselineskip}{0.28in}
\section{Introduction and motivation}\label{Intr}

Let $H$ be a topological space with Borel $\sigma$-field $\mathcal{B}(H)$, and let $\mathbb{X}:=\{X^x(t),t\geq0;x\in H\}$ be an $H$-valued  Markov process on some
probability space $(\Omega,\mathcal{F},\PP)$.
$\mathbb{X}$ is said to be  irreducible in $H$ if for each $t>0$ and $x\in H$
\begin{center}
$\PP(X^x(t)\in B)>0$ \quad for any non-empty open set $B$.
\end{center}
$\mathbb{X}$ is said to be accessible to
$x_0\in H$ if the resolvent $R_{\lambda}, \lambda>0$ satisfies
\begin{equation*}
R_{\lambda}(y,U)=\lambda \int_0^{\infty}e^{-\lambda t}\PP(X^x(t)\in U)dt>0
\end{equation*}
for all $x\in H$ and all neighborhoods $U$ of $x_0$, where $\lambda > 0$ is arbitrary. It is clear that irreducibility implies accessibility.

The accessibility and irreducibility are  fundamental properties of stochastic dynamic systems. The importance of the study of the accessibility and irreducibility  lies in  its relevance in the analysis of the ergodicity of  Markov processes. Indeed, accessibility is often used as a replacement of irreducibility when one proves ergodicity; see \cite{Hairer M 2006,Kapica}.

%

Now we mention the main existing results concerning the accessibility of stochastic partial differential equations (SPDEs) driven by  pure jump noise.
The authors in \cite{Xulihu} obtained the accessibility to zero of stochastic real valued Ginzburg-Landau equation on torus $\mathbb{T}=\mathbb{R}\setminus \mathbb{Z}$  driven by cylindrical symmetric $\alpha$-stable process with $\alpha\in(1,2)$.
In \cite{HE}, the authors established the accessibility to zero of stochastic shell models
driven by additive pure jump noise.
Recently, the authors in \cite{DRL} obtained the accessibility to zero of a class of semilinear SPDEs with Lipschitz coefficients driven by multiplicative pure jump noise. The driving noises they considered are a class of subordinated Wiener processes.
There are also several papers to study the irreducibility of SPDEs/SDEs driven by pure jump L\'evy noise; see \cite{AP,DWX2020,FHR 2016,17,WXX,WX,XZ}.
The existing methods on the accessibility  are basically along the same lines as that for the Gaussian case, that is, two ingredients  play very important role: the (approximate) controllability of the associated PDEs and the support of  L\'evy processes/stochastic convolutions  on  path spaces.
In the existing paper on the accessibility  of  SPDEs driven by pure jump noise,  very restrictive assumptions on the driving noises are placed: The driving noises are more or less in the class of stable processes with additional assumptions; see, e.g., Condition (ii)  in \cite{Xulihu}, Assumption (A4) and the assumption that the L\'evy measure $\nu$ of the subordinator satisfies $\nu((0,\infty))=\infty$ in \cite{DRL}, Assumptions 2.3 and 2.8 in \cite{HE}, etc.  
The use of existing methods to deal with the case of other types of pure jump noises appears to be unclear. And using these methods to study the accessibility  of stochastic equations with singular/highly nonlinear terms would be very hard, if not impossible.

In a recent paper \cite{WYZZ}, we developed a new
criterion for the irreducibility of SPDEs driven by multiplicative pure jump noise.  We point out that under the assumptions on the driving noises in the existing paper on the accessibility  mentioned above, one can obtain the irreducibility of the SPDEs as an application of the main results in \cite{WYZZ}.


To the best of our knowledge, there are no results on the accessibility of SPDEs driven by pure  jump degenerate noises. This strongly motivates the current paper.
The aim of this paper is to investigate the accessibility of a class of SPDEs driven by additive (possibly degenerate) jump noise; see Theorem \ref{thm1} in this paper.
For the first time,  we obtain the accessibility of a class of SPDEs driven by additive pure jump degenerate noise, including many stochastic Hydrodynamical systems, such as 2D stochastic Navier-Stokes equations, stochastic Burgers
type equations, etc., and
a class of SPDEs with  weakly dissipative coefficients, such as singular stochastic $p$-Laplace equations,  stochastic fast diffusion equations, etc.; see Proposition \ref{prop 3} in this paper.
As a further application, we establish the ergodicity of a class of SPDEs with  weakly dissipative coefficients, which covers singular stochastic $p$-Laplace equations and  stochastic fast diffusion equations, etc. The driving noises could be
L\'evy processes with heavy tails.
%
%

The organization of this paper is as follows. In Section 2,  we introduce the framework and prove the main results. In Section 3,  we provide applications to SPDEs, including the accessibility of a class of SPDEs and the ergodicity of a class of SPDEs with  weakly dissipative coefficients.



\section{Accessibility}

In this section, we will introduce the framework and prove the main results.

Let $(\Om, \cF, \FF,\PP)$, where $\FF = \{\cF_t\}_{t \geq 0}$, be a filtered probability space satisfying the usual conditions. Let
$
V \subset H \cong H^* \subset V^*
$
be a Gelfand triple.
Let $\nu$  be a  given  $\si$-finite measure on $H$ satisfying  $\nu(\{0\})=0$ and $\int_{H}\|z\|_H^2\wedge1\nu(dz)<\infty$.
Set $N: \cB(H\times\RR^+) \times \Omega\rightarrow \bar{\NN} = \NN \cup \{0, \infty\}$ be the time homogeneous Poisson random measure with intensity measure $\nu$.  $\tilde{N} (dz,dt) = N(dz,dt) - \nu(dz)dt$ denotes the compensated Poisson random measure.
 $$L(t)=\int_0^t\int_{0<\|z\|_H\leq1}z\tilde{N}(dz,ds) + \int_0^t\int_{\|z\|_H>1} zN(dz,ds), t\geq0$$ defines an $H$-valued L\'evy process.
We consider the following SPDEs driven by $L(t)$
\begin{eqnarray}\label{meq1}
  dX(t)&=& \cA (X(t)) dt +dL(t),
\end{eqnarray}
where the mapping $\cA:V\rightarrow V^*$ is a Borel function.

\begin{defi}
An $H$-valued $\rm c\grave{a}dl\grave{a}g$ $\FF$-adapted process $X^x$ is called a solution of (\ref{meq1}) with initial data $x\in H$ if the following conditions are satisfied.

(I) $X^x(t,\omega)\in V$ for $dt\otimes \PP$-almost all $(t,\omega)\in[0,\infty)\times\Omega$, where
$dt$ stands for the Lebesgue measures on $[0,\infty)$;

(II)  $\int_0^t|\cA(X^x(s))|_{V^*}ds<\infty$, $t\geq 0$, $\PP$-a.s.;

(III) As an equation in $V^*$, the following equality holds, $\PP$-a.s.,
\ba\label{eq o 1}
X^x(t) = x + \int_0^t{\cA(X^x(s))ds}+ L(t), \quad t\geq 0.
\ea

\end{defi}

We also need to consider
\begin{eqnarray}
  &&dX^{\eps}(t)= \cA (X^{\eps}(t)) dt +\int_{0<\|z\|_H\leq \eps}z\tilde{N}(dz,dt),\ \epsilon\in (0,1],\label{meq2}\\
  &&dY(t)= \cA (Y(t)) dt.\label{meq3}
\end{eqnarray}

In the following, we use the notation $X^x$, $X^{\eps,x}$ and $Y^x$ to indicate the solutions of (\ref{meq1}), (\ref{meq2}) and (\ref{meq3}) starting from $x$, respectively.
We introduce the following assumption.
\begin{ass}\label{A}
%
%
%
\begin{itemize}
  \item[{\bf (A0)}]  $\nu$ is symmetric, i.e., $\nu(C)=\nu(-C)$, $\forall C\in\mathcal{B}(H)$.

 \item[{\bf (A1)}] For any $x \in H$, there exist unique global solutions $X^x$, $X^{\eps,x}$ and $Y^x$ to (\ref{meq1}), (\ref{meq2}) and (\ref{meq3}), respectively, satisfying
     \begin{itemize}
    \item[(A1-1)] For any $x \in H$, $\lim_{t \rightarrow \infty} \| Y^x(t) \|_H = 0$.

 \item[(A1-2)] For any $t>0$ and $x \in H$, $\lim_{\eps \rightarrow 0} \|X^{\eps,x}(t) - Y^{x}(t) \|_H = 0$ in probability.

  \item[(A1-3)] For any $\eta>0$, there exist $(\zeta,t)=(\zeta(\eta),t(\eta))\in (0,\frac{\eta}{2}]\times (0,\infty)$ such that\\
$ \inf_{y \in B(0,\zeta)}\mathbb{P}(\sup_{s\in[0,t]}\|X^y(s)\|_H\leq \eta)>0$.
Here $B(0,\zeta)=\{h\in H:\|h\|_H<\zeta\}$.
\end{itemize}

\end{itemize}
\end{ass}
%

\vskip 0.2cm
The main result of this paper is as follows:
 \begin{thm}\label{thm1}
 Assume that Assumption \ref{A} holds, then $\{X^x\}_{x\in H}$ is accessible to zero.
 \end{thm}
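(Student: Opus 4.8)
The goal is to show that for every $x\in H$ and every neighborhood $U$ of $0$, the resolvent satisfies $R_\lambda(x,U)>0$, which amounts to finding a single time $t>0$ with $\PP(X^x(t)\in U)>0$. My plan is to decompose the driving L\'evy process according to the size of its jumps, controlling the large-jump part by exploiting the symmetry assumption (A0), and to steer the small-jump equation toward the deterministic dynamics $Y$, which (A1-1) forces to the origin.

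\textbf{Step 1: Reduce to small balls and use the deterministic flow.} Fix a neighborhood $U$ of $0$ and choose $\eta>0$ so that $B(0,\eta)\subset U$; it suffices to reach $B(0,\eta)$. By (A1-3), pick $(\zeta,t_1)\in(0,\tfrac{\eta}{2}]\times(0,\infty)$ with $\inf_{y\in B(0,\zeta)}\PP(\sup_{s\in[0,t_1]}\|X^y(s)\|_H\le\eta)=:p_0>0$. The idea is to bring the process from the arbitrary start $x$ into the small ball $B(0,\zeta)$ by some time $t_2$ with positive probability, and then restart and stay inside $U$ for the interval $[t_2,t_2+t_1]$ using the Markov property: $\PP(X^x(t_2+t_1)\in B(0,\eta))\ge \EE\big[\mathbbm{1}_{\{X^x(t_2)\in B(0,\zeta)\}}\,\PP(\sup_{s\le t_1}\|X^y(s)\|_H\le\eta)\big|_{y=X^x(t_2)}\big]\ge p_0\,\PP(X^x(t_2)\in B(0,\zeta))$.

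\textbf{Step 2: Enter the small ball $B(0,\zeta)$.} This is where the main work lies. By (A1-1) the deterministic solution $Y^x(t)\to 0$, so there is $t_2$ with $\|Y^x(t_2)\|_H<\zeta/2$. The plan is to approximate $X^x$ on $[0,t_2]$ by $X^{\eps,x}$ plus the large-jump process. Write $L=L^{\le\eps}+L^{>\eps}$ where $L^{>\eps}$ collects the compensated small-medium jumps and the big jumps; on the event $\Omega_\eps$ that $L^{>\eps}$ has \emph{no} jumps of size exceeding $\eps$ on $[0,t_2]$, the equation $X^x$ coincides in law with $X^{\eps,x}$ shifted by the compensator drift. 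By (A1-2), $X^{\eps,x}(t_2)\to Y^x(t_2)$ in probability as $\eps\to0$, so for small $\eps$, $\PP(\|X^{\eps,x}(t_2)\|_H<\zeta)>0$, and in fact this probability is close to one. The crucial point is that $\PP(\Omega_\eps)=\exp(-t_2\,\nu(\{\|z\|_H>\eps\}))>0$ since $\nu$ is $\sigma$-finite and $\int \|z\|_H^2\wedge 1\,\nu(dz)<\infty$ forces $\nu(\{\|z\|_H>\eps\})<\infty$. Because the small jumps (driving $X^{\eps,x}$) and the large jumps are independent Poisson random measures, combining the event $\{\|X^{\eps,x}(t_2)\|_H<\zeta\}$ with $\Omega_\eps$ gives $\PP(X^x(t_2)\in B(0,\zeta))>0$.

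\textbf{Main obstacle.} The delicate point is the bookkeeping in Step 2: I need the solution of (\ref{meq1}) restricted to the no-large-jump event to genuinely agree with the solution of (\ref{meq2}), keeping careful track of how the compensator $\int_{\|z\|_H\le 1}z\,\nu(dz)\,dt$ and the truncation level $\eps$ interact, and I must ensure the independence decomposition of $N$ into its small- and large-jump parts is rigorous on the path level. Assumption (A0) (symmetry of $\nu$) is presumably what allows the compensated small-jump integral to be centered appropriately so that $X^{\eps,x}$ tracks the \emph{drift-free} deterministic equation $Y$ rather than a drifted version; reconciling the compensator with the degenerate, possibly singular operator $\cA$ is the part I expect to require the most care. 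Once the positivity $\PP(X^x(t_2)\in B(0,\zeta))>0$ is secured, Steps 1 and 3 combine immediately via the Markov property to yield $\PP(X^x(t_2+t_1)\in U)>0$, and hence $R_\lambda(x,U)>0$.
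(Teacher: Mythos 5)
Your proposal follows essentially the same route as the paper's proof: use (A1-1) and (A1-2) to bring $X^{\eps,x}$ near zero at a large time $t_2$; transfer this to $X^x$ on the event that no jump of size exceeding $\eps$ occurs on $[0,t_2]$ (an event of positive probability $e^{-t_2\nu(\|z\|_H>\eps)}$, independent of the small-jump dynamics); and conclude with the Markov property and (A1-3). The ``main obstacle'' you flag is resolved exactly as you suspect, and is precisely how the paper argues via the stopping time $\tau^1_{\tilde\eps}=\inf\{t\ge 0:\int_0^t\int_{\|z\|_H>\tilde\eps}N(dz,ds)=1\}$: by the symmetry (A0), $\int_{\eps<\|z\|_H\le 1}z\,\nu(dz)=0$, so on the no-large-jump event the driving noise of (\ref{meq1}) coincides pathwise with that of (\ref{meq2}), and uniqueness of solutions (built into (A1)) gives $X^x=X^{\eps,x}$ there. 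One point of care: this agreement must be \emph{pathwise} on the event, not merely ``in law'' as your Step 2 first says; equality in law would not allow you to intersect $\{\|X^{\eps,x}(t_2)\|_H<\zeta\}$ with $\Omega_\eps$ and conclude $\PP(\|X^x(t_2)\|_H<\zeta)>0$. Your later remark shows you are aware of this.

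There is, however, one genuine flaw in your opening reduction: accessibility does \emph{not} amount to finding a single time $t>0$ with $\PP(X^x(t)\in U)>0$. The resolvent $R_\lambda(x,U)=\lambda\int_0^\infty e^{-\lambda t}\PP(X^x(t)\in U)\,dt$ is an integral in $t$, so positivity at one instant --- a Lebesgue-null set of times --- gives no lower bound on the integral; you need positivity on a set of times of positive Lebesgue measure. This is exactly why the paper proves its estimate for all $t$ in an interval $[T_0,T_0+\frac{\de}{2}]$. Fortunately your own Step 1 already contains the repair: since (A1-3) controls $\sup_{s\in[0,t_1]}\|X^y(s)\|_H$, the Markov-property estimate yields $\PP\big(X^x(t)\in B(0,\eta)\big)\ge p_0\,\PP\big(X^x(t_2)\in B(0,\zeta)\big)>0$ for \emph{every} $t\in[t_2,t_2+t_1]$, not only at the endpoint, whence $R_\lambda(x,U)\ge \lambda\int_{t_2}^{t_2+t_1}e^{-\lambda t}\,p_0\,\PP\big(X^x(t_2)\in B(0,\zeta)\big)\,dt>0$. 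With that one correction your argument is complete and coincides with the paper's.
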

 \begin{proof}
 By the definition of accessibility, we see that the proof of this theorem will be complete once
we prove that  for any fixed $x\in H$ and $\kappa>0$, there exists $T_0>0$ and $\de >0$  such that
 \begin{equation}\label{acc-1}
\PP(\|X^x(t)\|_H \leq \kappa) >0, \quad \forall t \in [T_0, T_0+\frac{\de}{2}].
\end{equation}
In the remaining part of the proof, we show (\ref{acc-1}).

Now, we fix $x\in H$ and $\kappa>0$. By Condition (A1), we see that
there exist $\beta\in (0,\frac{\kappa}{16}]$ and $\delta>0$ such that
\begin{eqnarray}\label{eq Zhai 01}
  \inf_{y\in B(0,\beta)}\mathbb{P}\big(\sup_{0\leq s\leq  \delta}\|X^y(s)\|_{H}\leq \frac{\kappa}{8}\big)>0,
  \end{eqnarray}
and that there exist $\tilde{T}>0$ and $\tilde{\epsilon}>0$ such that
\begin{eqnarray}\label{eq Zhai 02}
  \mathbb{P}\big(\|X^{\tilde{\epsilon},x}(\tilde{T})\|_{H}<\beta\big)>0.
  \end{eqnarray}

Let $\tau_{\tilde{\epsilon}}^1=\inf\{t\geq 0:\int_0^t\int_{\|z\|_{H}>\tilde{\epsilon}}N(dz,ds)=1\}$. $\tau_{\tilde{\epsilon}}^1$ has the exponential distribution with parameter $\nu(\|z\|_{H}>\tilde{\epsilon})<\infty$, that is,
\begin{eqnarray}\label{eq Zhai 03}
\mathbb{P}(\tau_{\tilde{\epsilon}}^1> s)=e^{-\nu(\|z\|_{H}>\tilde{\epsilon})s},\ \ \ \ \mathbb{P}(\tau_{\tilde{\epsilon}}^1\leq s)=1-e^{-\nu(\|z\|_{H}>\tilde{\epsilon})s}.
\end{eqnarray}
Notice that, for any $y\in H$, $\{X^y(t),t\in[0,\tau_{\tilde{\epsilon}}^1)\}$ is the unique solution to (\ref{meq2}) with the initial data $y$ on $t\in[0,\tau_{\tilde{\epsilon}}^1)$, and that $\sigma\{X^{\tilde{\epsilon},x}(t),t\geq 0\}$ and $\sigma\{\tau_{\tilde{\epsilon}}^1\}$ are independent. Hence, by (\ref{eq Zhai 02}) and (\ref{eq Zhai 03}),
\begin{eqnarray}\label{eq Zhai 04}
  \mathbb{P}\big(\|X^x(\tilde{T})\|_{H}<\beta\big)
  &\geq&
  \mathbb{P}\big(\{\|X^x(\tilde{T})\|_{H}<\beta\}\cap \{\tau_{\tilde{\epsilon}}^1>\tilde{T}\}\big)\nonumber\\
  &=&
  \mathbb{P}\big(\{\|X^{\tilde{\epsilon},x}(\tilde{T})\|_{H}<\beta\}\cap \{\tau_{\tilde{\epsilon}}^1>\tilde{T}\}\big)\nonumber\\
  &=&
   \mathbb{P}\big(\|X^{\tilde{\epsilon},x}(\tilde{T})\|_{H}<\beta\big) \mathbb{P}\big(\tau_{\tilde{\epsilon}}^1>\tilde{T}\big)
  >0.
  \end{eqnarray}
 By the Markov property of $\{X^h\}_{h\in H}$, (\ref{eq Zhai 01}) and (\ref{eq Zhai 04}), we have, for any $t\in[\tilde{T},\tilde{T}+\frac{\delta}{2}]$,
\begin{eqnarray*}
  \mathbb{P}(\|X^x(t)\|_{H}\leq \kappa)
  &=&\mathbb{E}\Big[\mathbb{E}[I_{\{\|X^x(t)\|_{H}\leq \kappa\}}|\mathcal{F}_{\tilde{T}}]\Big]\\
  &\geq&\mathbb{E}\Big[\mathbb{E}[I_{\{\|X^y(t-\tilde{T})\|_{H}\leq \kappa\}}|X^x(\tilde{T})=y]I_{[0,\beta)}(\|X^x(\tilde{T})\|_{H})\Big]>0.
\end{eqnarray*}

The proof is complete.
 \end{proof}

\section{Applications}

In this section, as an application of the main result,  we obtain the accessibility of a class of coercive and local monotone SPDEs driven by additive pure jump noise.

 Let us formulate the  assumptions on the coefficients $\mathcal{A}$. Suppose that there exist constants $\al > 1,\ \be \geq 0,\ \th > 0,\ C > 0,\ \widetilde{C}>0$, $\varpi>0$, $F>0$ and a measurable (bounded on balls)  function $\rho: V \rightarrow [0, +\infty)$ such that the following conditions hold for all $v,\ v_1,\ v_2 \in V$:

(H1)  (Hemicontinuity) The map $s \mapsto _{V^*}\langle \mathcal{A}(v_1 + sv_2), v \rangle_V $ is continuous on $\RR$.

(H2) (Local monotonicity) $2_{V^*}\langle \mathcal{A}(v_1)-\mathcal{A}(v_2), v_1-v_2\rangle_V
\leq (C+\rho(v_2))\|v_1 - v_2\|_H^2.$

(H3) (Coercivity) $2_{V^*}\langle \mathcal{A}(v), v \rangle_V + \th \|v\|_V^{\al} \leq F +C\|v\|_H^2.$

(H4) (Growth) $\|\mathcal{A}(v)\|_{V^*}^{\frac{\al}{\al-1}} \leq (F + C\|v\|_V^{\al})\big(1 + \|v\|_H^{\be}\big).$

(H5) $\rho(v)\leq C(1+\|v\|_V^\alpha)(1+\|v\|_H^{\beta}).$


(H6) $2_{V^*}\langle \mathcal{A}(v), v \rangle_V+\widetilde{C}\|v\|_H^\varpi\leq 0.$


Assume that (H1)-(H5) hold, the well-posedness of (\ref{meq1}), (\ref{meq2}) and (\ref{meq3}) was proved in \cite[Theorem 1.2]{BWJ}, and for any $x\in H$ and $\epsilon>0$,
\begin{eqnarray}
&&X^x,X^{\epsilon,x}\in D([0,\infty);H)\cap L^\alpha_{loc}([0,\infty);V),\ \mathbb{P}\text{-a.s.,}\label{eq Space 1}\\
&& Y^x\in C([0,\infty);H)\cap L^\alpha_{loc}([0,\infty);V).\label{eq Space 2}
\end{eqnarray}
Here we denote by $D([0,\infty);H)$ and  $C([0,\infty);H)$ the spaces of all $\rm c\grave{a}dl\grave{a}g$ paths and continuous paths from $[0,\infty)$ to $H$, respectively.

We consider the accessibility of (\ref{meq1}). The main result is stated as follows.

\begin{prop}\label{prop 3}
Assume that (H1)-(H6) hold and $\nu$ is symmetric, then the solution $\{X^x\}_{x\in H}$ of (\ref{meq1}) is accessible to zero.
\end{prop}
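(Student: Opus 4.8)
The plan is to verify that the hypotheses (H1)--(H6) together with the symmetry of $\nu$ imply Assumption~\ref{A}, so that Theorem~\ref{thm1} applies directly and yields accessibility to zero. Since the well-posedness and the space regularity \eqref{eq Space 1}--\eqref{eq Space 2} are already guaranteed by (H1)--(H5) via \cite[Theorem 1.2]{BWJ}, and since (A0) is exactly the symmetry assumption on $\nu$, the work reduces to establishing the three dissipativity/continuity conditions (A1-1), (A1-2) and (A1-3).

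First I would prove (A1-1), the deterministic decay $\lim_{t\to\infty}\|Y^x(t)\|_H=0$. This is where hypothesis (H6) enters. Testing the deterministic equation \eqref{meq3} against $Y^x$ and using (H6) gives $\frac{d}{dt}\|Y^x(t)\|_H^2 = 2\,_{V^*}\langle\cA(Y^x(t)),Y^x(t)\rangle_V \leq -\widetilde{C}\|Y^x(t)\|_H^\varpi$. Writing $g(t)=\|Y^x(t)\|_H^2$, this is a differential inequality $g'(t)\leq -\widetilde{C}\,g(t)^{\varpi/2}$, whose solutions decay to $0$ as $t\to\infty$ (either exponentially if $\varpi=2$, or in the fast/finite-time manner when $\varpi<2$, or polynomially when $\varpi>2$); in every case $g(t)\to 0$. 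I would make this rigorous by comparison with the corresponding ODE.

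Next I would establish (A1-2), the convergence $\|X^{\eps,x}(t)-Y^x(t)\|_H\to 0$ in probability as $\eps\to 0$. Subtracting \eqref{meq3} from \eqref{meq2} and applying the Itô formula for the $H$-norm to the difference $Z^\eps:=X^{\eps,x}-Y^x$, the drift contributes a term $2\,_{V^*}\langle\cA(X^{\eps,x})-\cA(Y^x),Z^\eps\rangle_V$ which by the local monotonicity (H2) is bounded by $(C+\rho(Y^x))\|Z^\eps\|_H^2$, while the small-jump martingale $\int_{0<\|z\|_H\le\eps}z\,\tilde N(dz,dt)$ contributes a quadratic-variation term of order $t\int_{0<\|z\|_H\le\eps}\|z\|_H^2\,\nu(dz)$, which tends to $0$ as $\eps\to 0$ by the integrability hypothesis on $\nu$. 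A Gronwall argument with the random coefficient $\rho(Y^x)$---controlled through (H5) and the $L^\alpha_{\loc}$ regularity of $Y^x$---then gives convergence, at least along a subsequence and hence in probability after a localization/stopping-time argument to handle the possibly unbounded $\int_0^t\rho(Y^x(s))\,ds$.

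Finally, (A1-3) asks for a uniform lower bound $\inf_{y\in B(0,\zeta)}\PP(\sup_{s\in[0,t]}\|X^y(s)\|_H\le\eta)>0$ for suitably small $\zeta,t$. I would obtain this from an energy estimate on \eqref{meq1}: applying Itô's formula to $\|X^y\|_H^2$, using the coercivity (H3) to absorb the drift and the symmetry together with a maximal inequality for the compensated jump martingale, one bounds $\EE\big[\sup_{s\le t}\|X^y(s)\|_H^2\big]$ in terms of $\|y\|_H^2$ plus terms that are small when $t$ and $y$ are small; a Markov/Chebyshev inequality then converts this into the desired positive probability bound, uniformly over $y\in B(0,\zeta)$. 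The main obstacle I anticipate is the interplay in (A1-2) between the random, merely locally integrable coefficient $\rho(Y^x)$ appearing in the local-monotonicity Gronwall estimate and the need for convergence in probability rather than in expectation; handling this cleanly will require a careful stopping-time truncation so that the moment bounds remain finite, and verifying that the stopping times tend to infinity as the truncation is removed is the technically delicate point.
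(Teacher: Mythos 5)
Your overall strategy is exactly the paper's: check (A0)--(A1) and invoke Theorem \ref{thm1}. Your treatments of (A1-1) and (A1-2) match the paper's proof in substance. One remark on (A1-2): the difficulty you flag at the end is not actually there, because $Y^x$ solves the \emph{deterministic} equation \eqref{meq3}, so $\rho(Y^x(\cdot))$ is a deterministic function, and by (H5) together with the regularity $Y^x\in C([0,\infty);H)\cap L^\alpha_{loc}([0,\infty);V)$ the Gronwall factor $\exp\big(C\int_0^t(1+\rho(Y^x(l)))\,dl\big)$ is a finite deterministic constant. The paper exploits precisely this: a stopping-time argument kills the martingale term, yielding $\EE\|X^{\varepsilon,x}(t)-Y^x(t)\|_H^2\leq C(t,x)\int_{\|z\|_H\leq\varepsilon}\|z\|_H^2\,\nu(dz)\to 0$, and $L^2$-convergence gives convergence in probability. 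No delicate localization beyond that is needed.

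The genuine gap is in your proof of (A1-3). You propose to bound $\EE\big[\sup_{s\le t}\|X^y(s)\|_H^2\big]$ by applying It\^o's formula directly to the solution of \eqref{meq1}. This fails in the generality of the proposition: the only integrability assumed on $\nu$ is $\int_H\|z\|_H^2\wedge 1\,\nu(dz)<\infty$, so $\int_{\|z\|_H>1}\|z\|_H^2\,\nu(dz)$ may be infinite (the paper explicitly advertises heavy-tailed noises, e.g.\ cylindrical $\alpha$-stable processes), in which case already $\EE\|L(t)\|_H^2=\infty$ and hence $\EE\big[\sup_{s\le t}\|X^y(s)\|_H^2\big]$ cannot be finite; no Chebyshev bound is available. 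Symmetry of $\nu$ does not repair this, since compensating the large jumps into a martingale requires exactly the second moment that is missing. The missing idea is to first remove the large jumps: with $\tau_{\varepsilon}^1=\inf\{t\geq 0:\int_0^t\int_{\|z\|_H>\varepsilon}N(dz,ds)=1\}$ one has $\PP(\tau_{\varepsilon}^1>t)=e^{-\nu(\|z\|_H>\varepsilon)t}>0$, this event is independent of the small-jump equation \eqref{meq2}, and on it $X^y$ coincides with $X^{\varepsilon,y}$; your energy estimate (with (H6) or (H3), BDG, and Chebyshev) is then valid for $X^{\varepsilon,y}$, whose noise has finite second moment $t\int_{\|z\|_H\le\varepsilon}\|z\|_H^2\,\nu(dz)$, and the two events are intersected at the end. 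This decoupling is exactly the trick used in the paper's proof of Theorem \ref{thm1}; for (A1-3) itself the paper does not give a self-contained argument but cites (4.18) in the proof of Proposition 4.1 of \cite{WYZZ}, where the estimate is carried out along these lines. So your route for (A1-3) is salvageable, but as written it omits the one step that makes the statement true for the noises the proposition is designed to cover.
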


\begin{rmk}
 The equation satisfying (H1)-(H6) not only covers many stochastic Hydrodynamical systems, including 2D stochastic Navier-Stokes equations, stochastic Burgers
type equations, 2D stochastic Magneto-Hydrodynamic equations, 2D stochastic Boussinesq model for the B\'enard convection, 2D stochastic Magnetic B\'ernard problem, 3D stochastic Leray $\al$-Model for Navier-Stokes equations and several stochastic Shell Models of turbulence, etc; but also covers
a class of SPDEs with  weakly dissipative coefficients such as stochastic $p$-Laplace equations, stochastic porous medium equations, stochastic fast diffusion equations, etc. We refer \cite{BHZ,Liu, PR} for more details.
\end{rmk}

{\bf Proof  of Proposition \ref{prop 3}}

We use  Theorem \ref{thm1} to prove this proposition, that is, we  verify Conditions (A0)-(A1).  The proof of  \cite[Proposition 4.1]{WYZZ} implies Condition (A1-3); see (4.18) in \cite{WYZZ}. Hence,  we only need to verify Conditions (A1-1) and (A1-2).

Applying the chain rule and Condition (H6),
\begin{equation}\label{yang4}
d\|Y^x(t)\|_H^2= 2_{V^*}\langle \mathcal{A}(Y^x(t)), Y^x(t) \rangle_V\leq-\widetilde{C}\|Y^x(t)\|_H^\varpi,
\end{equation}
which implies that the map $t\rightarrow\|Y^x(t)\|_H^2$ is decrease. Then by a  contradiction argument, it is easy to see that
$$\lim_{t \rightarrow \infty} \| Y^x(t) \|_H = 0.$$
Hence Condition (A1-1) holds.

By Condition (H2) and applying the It\^o formula,
\begin{eqnarray*}
&&e^{-C\int_0^t{1+\rho(Y^x(s))ds}} \|X^{\eps,x}(t) - Y^{x}(t) \|_H^2 \\
&=&  2\int_0^te^{-C\int_0^s{1+\rho(Y^x(l))dl}}\int_{\|z\|_H \leq \eps} \langle X^{\eps,x}(s-) - Y^{x}(s-), z \rangle \tilde{N}(dz, ds) \\
&&+ \int_0^te^{-C\int_0^s{1+\rho(Y^x(l))dl}}\int_{\|z\|_H \leq \eps} \|z\|_H^2 N(dz, ds) .
\end{eqnarray*}
Using a standard stopping time argument,
\begin{equation}\label{NS-3}
\EE \|X^{\eps,x}(t) - Y^{x}(t) \|_H^2 \leq \int_0^t e^{C\int_s^t1+\rho(Y^x(l))dl}ds\int_{\|z\|_H \leq \eps} \|z\|_H^2 \nu(dz).
\end{equation}
By (\ref{eq Space 2}) and Condition (H5),
it is easy to see that  (\ref{NS-3}) implies Condition (A1-2).

The proof of this proposition is complete.\hskip 8cm $\square$


Now, we apply the accessibility to establish the ergodicity of a class of SPDEs with  weakly dissipative coefficients.

We will need the following additional  assumption on the coefficient $\mathcal{A}$.


{\bf(H7)} (Weakly dissipativity) $2_{V^*}\langle \mathcal{A}(v_1)-\mathcal{A}(v_2), v_1-v_2\rangle_V \leq 0.$

The main result of the ergodicity is given as follows:
%

\vskip 0.3cm
\begin{prop}\label{unique}
Assume that Conditions (H1), (H3), (H4), (H6), and (H7) hold, and $\nu$ is symmetric.
Then there exists at most one invariant measure to Eq. (\ref{meq1}).

Moreover, if the embedding $V\subseteq H$ is compact and there exists $\hat\theta\in((2-\alpha)\vee0,2]$ such that
\begin{equation}\label{yang2}
  \int_{\|z\|_H>1}\|z\|_H^{\hat\theta}\nu(dz)< \infty,
\end{equation}
and the following condition holds:

(H3')
\[
2_{V^*}\langle \mathcal{A}(v), v \rangle_V + \th \|v\|_V^{\al} \leq F +C\|v\|_H^{2-\hat\theta}.
\]
Then there exists a unique invariant measure to Eq. (\ref{meq1}).
\end{prop}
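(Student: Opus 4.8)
The plan is to prove Proposition \ref{unique} in two parts: first the uniqueness of the invariant measure, and then its existence under the additional hypotheses. For uniqueness, the standard strategy is to show that the transition semigroup is \emph{asymptotically strong Feller} or, more directly here, that the weak dissipativity (H7) forces any two solutions started from different initial data to coalesce in $L^2(\Omega;H)$ as $t\to\infty$. Concretely, if $X^{x_1}$ and $X^{x_2}$ are two solutions of \eqref{meq1} driven by the \emph{same} noise $L(t)$, then their difference $D(t):=X^{x_1}(t)-X^{x_2}(t)$ satisfies $dD(t)=(\mathcal A(X^{x_1}(t))-\mathcal A(X^{x_2}(t)))\,dt$ in $V^*$, with the noise cancelling because it is additive. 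Applying the chain rule and (H7) gives $\tfrac{d}{dt}\|D(t)\|_H^2=2_{V^*}\langle\mathcal A(X^{x_1}(t))-\mathcal A(X^{x_2}(t)),D(t)\rangle_V\leq 0$, so $t\mapsto\|D(t)\|_H^2$ is nonincreasing. This contraction, combined with the accessibility to zero established in Proposition \ref{prop 3}, should yield uniqueness: accessibility guarantees that the supports of all transition probabilities share the common point $0$, and the (weak) contraction forbids two distinct ergodic invariant measures, since their supports would have to overlap near zero yet remain mutually singular.

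For the existence part, the plan is to invoke the Krylov--Bogoliubov theorem. I would first derive a uniform-in-time moment bound on $\|X^x(t)\|_H$ using the coercivity hypothesis (H3') together with the tail condition \eqref{yang2}. The exponent condition $\hat\theta\in((2-\alpha)\vee0,2]$ is chosen precisely so that the dissipative term $-\th\|v\|_V^\alpha$ dominates the growth $C\|v\|_H^{2-\hat\theta}$ on the right-hand side of (H3'): applying Itô's formula to $\|X^x(t)\|_H^2$ (or more delicately to a function like $\|X^x(t)\|_H^{\hat\theta}$), taking expectations, and controlling the jump contribution via $\int_{\|z\|_H>1}\|z\|_H^{\hat\theta}\nu(dz)<\infty$ should produce an estimate of the form $\sup_{t\geq 0}\EE\,\phi(\|X^x(t)\|_H)<\infty$ for a suitable function $\phi$ with compact sublevel sets in $H$. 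The compact embedding $V\subseteq H$ then upgrades the $L^\alpha_{\loc}$-regularity \eqref{eq Space 1} plus the moment bound into tightness of the time-averaged laws $\tfrac1T\int_0^T \mathcal L(X^x(t))\,dt$ on $H$. By Krylov--Bogoliubov, any weak limit point is an invariant measure, and combined with the uniqueness just proved, this gives the unique invariant measure.

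The main obstacle I expect is establishing the uniform moment bound in the existence part with the \emph{sublinear} dissipation allowed by (H3'). Unlike the coercive case (H3) with $\|v\|_H^2$ on the right, the weaker bound $\|v\|_H^{2-\hat\theta}$ means the naive Gronwall estimate on $\EE\|X^x(t)\|_H^2$ does not close, and one must instead work with the fractional moment $\|v\|_H^{\hat\theta}$ so that the heavy-tailed jumps remain integrable. Handling the jump term of the Itô formula for a non-quadratic functional is technically delicate: the compensator gives a term $\int_{H}\big(\|X+z\|_H^{\hat\theta}-\|X\|_H^{\hat\theta}-\hat\theta\|X\|_H^{\hat\theta-2}\langle X,z\rangle\mathbbm 1_{\|z\|_H\leq 1}\big)\nu(dz)$, and one needs the symmetry assumption (A0) on $\nu$ together with the second-order Taylor bound to control the small jumps and \eqref{yang2} to control the large jumps. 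Getting the dissipativity from $-\th\|v\|_V^\alpha$ to beat this jump contribution uniformly in time, via a Young/interpolation inequality relating $\|v\|_V$ and $\|v\|_H$, is where the precise range of $\hat\theta$ must be used and is the crux of the argument.
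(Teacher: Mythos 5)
Your uniqueness argument starts with the right computation and it is the same one the paper uses: with additive noise the difference of two solutions solves a deterministic equation, and (H7) gives $\EE\|X^x(t)-X^y(t)\|_H^2\leq\|x-y\|_H^2$, i.e.\ the so-called $e$-property of the semigroup. But your concluding inference is not valid as stated. You argue that two distinct ergodic invariant measures would ``have to overlap near zero yet remain mutually singular'', and this is no contradiction: mutually singular measures can have overlapping, even identical, topological supports. Accessibility to $0$ only forces $0$ to lie in the support of every invariant measure; it says nothing about sets of full measure. The missing ingredient is the nontrivial theorem that converts ($e$-property $+$ accessibility to a common point) into uniqueness; the paper invokes exactly this, namely Theorem 2 of \cite{Kapica}. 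Without citing such a result (or running the Hairer--Mattingly asymptotic strong Feller machinery in full, which you mention but do not carry out), the uniqueness half is not proved.

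The existence plan targets the wrong estimate, in two ways. First, a bound of the form $\sup_{t\geq0}\EE\,\phi(\|X^x(t)\|_H)<\infty$ with $\phi$ a function of the $H$-norm can never give tightness: sublevel sets of such a $\phi$ are balls in $H$, which are not compact in infinite dimensions; compactness has to come from the $V$-norm through the compact embedding, so the quantity to bound is a $V$-norm moment. Second, a \emph{uniform-in-time} moment bound is precisely what the sublinear dissipation in (H3') refuses to give (as you yourself observe, the Gronwall estimate does not close), but it is also more than is needed. The paper's resolution, following \cite{Dong}, is to apply It\^o's formula to $f(u)=(\|u\|_H^2+1)^{\hat\theta/2}$; the whole point of the exponent $2-\hat\theta$ in (H3') is that after weighting by $\nabla f$ the growth term $C\|v\|_H^{2-\hat\theta}/(\|v\|_H^2+1)^{1-\hat\theta/2}$ becomes \emph{bounded}, while large jumps are controlled by $|f(u+z)-f(u)|\leq C\|z\|_H^{\hat\theta}$ together with (\ref{yang2}) and small jumps by the boundedness of $\nabla^2 f$ (this is where $\hat\theta\leq2$ enters). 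This yields the linear-in-time bound
\[
\EE\int_0^t\frac{\|X^x(s)\|_V^{\alpha}}{(\|X^x(s)\|_H^2+1)^{1-\hat\theta/2}}\,ds\leq C\big(f(x)+1+t\big),
\]
and since $\alpha+\hat\theta-2>0$ one deduces $\EE\int_0^t\|X^x(s)\|_V^{\alpha+\hat\theta-2}\,ds\leq C(1+f(x)+t)$. Dividing by $t$, Chebyshev's inequality and the compact embedding $V\subseteq H$ give tightness of the time-averaged laws, and Krylov--Bogoliubov concludes. So the ``crux'' you flag is resolved not by making $-\theta\|v\|_V^\alpha$ beat the jump contribution uniformly in time, but by weighting the dissipation with $(\|u\|_H^2+1)^{\hat\theta/2-1}$ and settling for time-averaged, linearly growing bounds.
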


Before we give the proof of the proposition, we would like to present two examples of the driving noises which satisfy the conditions of the proposition.
\begin{rmk}
 We remark that Condition (\ref{yang2}) includes a large class of the so-called L\'evy processes with heavy tails, i.e., $ \int_{\|z\|_H>1}\|z\|_H^{\chi}\nu(dz)= \infty$ holds for some $0<\chi \leq 2$. In
the following, we give two examples.

{\bf Example 1: Cylindrical L\'evy process}

Let $(L(t))_{t\geq0}$ be an infinite
dimensional cylindrical L\'{e}vy process given by
\begin{equation*}
L(t)=\sum_{j\in\mathbb{N}}\beta_jL^j(t)e_j,
\end{equation*}
where $\{(L^j(t))_{t\geq0},j\in \mathbb{N}\}$ is a sequence of independent one dimensional pure jump symmetric L\'{e}vy
processes with the same L\'{e}vy measure $\mu$, $\{\beta_j,j\in\mathbb{N}\}$ is a sequence of real numbers (possibly zero) and $\{e_j,j\in\mathbb{N}\}$ is a sequence of orthonormal basis of Hilbert space H. If for some $\theta\in(0,2]$,
\begin{equation*}
(H_{\theta}):\quad\int_{|x|>1}|x|^{\theta}\mu(dx)+\sum_{j\in\mathbb{N}}|\beta_j|^{\theta}<+\infty,
\end{equation*}
then the intensity measure $\nu$ of $(L(t))_{t\geq0}$ is symmetric and satisfies (\ref{yang2}) with $\hat\theta=\theta$. Notice that if $\mu(dx)=dx/|x|^{1+\alpha}$ with $\alpha\in(0,2)$, then $\int_{|x|>1}|x|^{\theta}\mu(dx)<\infty$ holds with $\theta\in(0,\alpha)$. For this case, $(L(t))_{t\geq0}$
is the so-called
cylindrical $\alpha$-stable process with $\alpha\in(0,2)$.

{\bf Example 2:  Subordinated cylindrical Wiener process with a $\alpha/2$-stable subordinator, $\alpha\in(0,2)$}

 Let $\{W_t,t\geq0\}$ be a $Q$-Wiener process on $H$, $Q\in L(H)$ is nonnegative, symmetric, with finite trace and possibly degenerate, i.e., $Ker Q\neq\{0\}$; here $L(H)$ denotes the set of all bounded linear operators on $H$. For $\alpha\in(0,2)$, let
$S^\alpha_t,~t\geq0$ be an  $\alpha/2$-stable subordinator independent of $\{W_t,t\geq0\}$.
The subordinated cylindrical Wiener motion $L^\alpha_t, t\geq0$  on $H$ is defined by
$$
L^\alpha_t:=W_{S^\alpha_t},~~t\geq0.
$$
Then the intensity measure $\nu$ of $L^\alpha_t,~t\geq0$ is symmetric and satisfies (\ref{yang2}) with $\hat\theta\in(0,\alpha)$.

\end{rmk}

{\bf Proof of Proposition \ref{unique}}

Applying Proposition \ref{prop 3},
$(X^x(t))_{t\geq 0}$ is accessible to zero.  For any $x,y\in H$, by (H7), applying the It\^{o} formula to $\|X^x(t)-X^y(t)\|_H^2$ to get
    \begin{eqnarray*}
\mathbb{E}(\|X^x(t)-X^y(t)\|_H^2)\leq \|x-y\|_H^2, \ \forall t\geq0.
\end{eqnarray*}
Hence $\{X^x\}_{x\in H}$ satisfies the so-called $e$-property; see \cite{Kapica, KPS 2010}. This together with the accessibility implies  the uniqueness of the invariant measure (if it exists); see  Theorem 2 in \cite{Kapica}.

Now, we prove the second part of this proposition. Indeed, we only need to prove the existence of the invariant measure.

The proof is in the spirit of \cite{Dong}. Define a function $f$ on $H$ by
$f(u)=(\|u\|_H^2+1)^{\frac{\hat\theta}{2}}$. Applying the It\^{o} formula gives
 \begin{eqnarray*}
&&f(X^x(t))\nonumber\\
&=&f(x)+\hat\theta\int_0^t\frac{_{V^*}\langle \mathcal{A}(X^x(s)), X^x(s) \rangle_V }{(\|X^x(s)\|_H^2+1)^{1-\frac{\hat\theta}{2}}}ds\nonumber\\
&&+\int_0^t\int_{0<\|z\|_H\leq 1}[\|X^x(s-)+z\|_H^2+1]^{\frac{\hat\theta}{2}}-[\|X^x(s-)\|_H^2+1]^\frac{\hat\theta}{2}\tilde N(ds,dz)\nonumber\\
&&+\int_0^t\int_{\|z\|_H> 1}[\|X^x(s-)+z\|_H^2+1]^{\frac{\hat\theta}{2}} -[\|X^x(s-)\|_H^2+1]^{\frac{\hat\theta}{2}}N(ds,dz)\nonumber\\
&&+\int_0^t\int_{0<\|z\|_H\leq 1}[\|X^x(s)+z\|_H^2+1]^{\frac{\hat\theta}{2}} -[\|X^x(s)\|_H^2+1]^{\frac{\hat\theta}{2}}
-\frac{\hat\theta\langle X^x(s),z\rangle}{(\|X^x(s)\|_H^2+1)^{1-\frac{\hat\theta}{2}}}\nu(dz)ds.
\end{eqnarray*}
Using Condition (H3'), (\ref{yang2}), the following inequality
\begin{equation*}
|f(u)-f(v)|\leq C|(\|u\|_H^2+1)^{\frac{1}{2}}-(\|v\|_H^2+1)^{\frac{1}{2}}|^{\hat\theta}\leq C\|u-v\|_H^{\hat\theta},\quad u,~v\in H,
\end{equation*}
 and the Taylor expansion gives
\begin{eqnarray*}
&&\mathbb{E}f(X^x(t))+\frac{\theta\hat\theta}{2}\mathbb{E}\int_0^t\frac{\|X^x(s)\|_V^{\alpha}}{(\|X^x(s)\|_H^2+1)^{1-\frac{\hat\theta}{2}}}ds\nonumber\\
&\leq&f(x)+\frac{F\hat\theta}{2}\mathbb{E}\int_0^t\frac{1}{(\|X^x(s)\|_H^2+1)^{1-\frac{\hat\theta}{2}}}ds+Ct\nonumber\\
&&+C\int_0^t\int_{\|z\|_H> 1}\|z\|_H^{\hat\theta}\nu(dz)ds+C\int_0^t\int_{0<\|z\|_H\leq 1}\|z\|_H^2\nu(dz)ds\nonumber\\
&\leq&f(x)+C(1+t).
\end{eqnarray*}
A standard stopping time argument and the following formulas have been used to obtain the above inequality.
\begin{equation*}
\nabla f(u)=\frac{\hat\theta u}{(\|u\|^2+1)^{1-\frac{\hat\theta}{2}}},\quad \nabla^2 f(u)=\frac{\hat\theta\Sigma_{i=1}^{\infty}e_i\otimes e_i}{(\|u\|^2+1)^{1-\frac{\hat\theta}{2}}}-\frac{\hat\theta(2-\hat\theta)u\otimes u}{(\|u\|^2+1)^{1-\frac{\hat\theta}{2}}}.
\end{equation*}
Here $\{e_i\}_{i\in\mathbb{N}}$ is an orthonormal basis of $H$.
Hence
\begin{equation}\label{yang3}
\mathbb{E}\int_0^t\frac{\|X^x(s)\|_V^{\alpha}}{(\|X^x(s)\|_H^2+1)^{1-\frac{\hat\theta}{2}}}ds\leq C(f(x) +1+t).
\end{equation}
Since $\al + \hat\theta -2  >0$, by (\ref{yang3}),
\begin{eqnarray*}
\mathbb{E}\int_0^t\|X^x(s)\|_V^{\al + \hat\theta -2 }ds
&\leq&
\mathbb{E}\int_0^t\frac{\|X^x(s)\|_V^{\al + \hat\theta -2 }(\|X^x(s)\|^{2-\hat\theta}+1)}{(\|X^x(s)\|^2+1)^{1-\frac{\hat\theta}{2}}}ds\\
&\leq&
C\mathbb{E}\int_0^t\frac{\|X^x(s)\|_V^\alpha+1}{(\|X^x(s)\|^2+1)^{1-\frac{\hat\theta}{2}}}ds
\leq C(1+f(x)+t).
\end{eqnarray*}
Therefore, by the classical Bogoliubov-Krylov argument (cf.\cite{Da1}), there exists at least one invariant measure of (\ref{meq1}).

The proof of Proposition \ref{unique} is complete.
\hskip 8cm $\square$

In the following, we apply Propositions \ref{unique} to obtain the ergodicity of singular stochastic $p$-Laplace equations and stochastic fast diffusion equations.

\begin{exmp}\label{exmp 00}{\bf Singular  Stochastic $p$-Laplace equation}
  Let $\Lambda$ be an open bounded domain in $\mathbb{R}^d$ with a sufficiently smooth boundary. Consider the following Gelfand triple
\begin{equation*}
V:=W^{1,p}_0(\Lambda)\subseteq H:=L^2(\Lambda)\subseteq (W^{1,p}_0(\Lambda))^*
\end{equation*}
and the stochastic $p$-Laplace equation
\begin{equation}\label{example1}
dX(t)=[div(|\nabla X(t)|^{p-2}\nabla X(t))]dt+dL(t),\quad X(0)=x\in H,
\end{equation}
where $p\in(1,2)$ for $d = 1, 2$ and $p \in [ \frac{2d}{d+2}, 2)$ for $d \geq 3$.
\end{exmp}

Notice that when   $p>\frac{2d}{d+2}$, the embedding $W^{1,p}_0(\Lambda)\subseteq L^2(\Lambda)$ is compact. Therefore, applying Proposition \ref{unique} to (\ref{example1}), if
let $p\in(1,2)$ for $d = 1, 2$ and $p \in [ \frac{2d}{d+2}, 2)$ for $d \geq 3$, and that
$\nu$ is symmetric,
then there exists at most one invariant measure to (\ref{example1}).
Moreover, if  $p\in(1\vee \frac{2d}{d+2},2)$ and (\ref{yang2}) holds, then there exists a unique invariant measure to (\ref{example1}).


\begin{exmp}\label{exmp 00}{\bf Stochastic fast diffusion equation}
 Let $\Lambda$ be an open (possibly unbounded) domain in $\mathbb{R}^d$ with a sufficiently smooth boundary. Consider the following Gelfand triple
\begin{equation*}
V:=L^{r+1}(\Lambda)\subseteq H:=W^{-1,2}(\Lambda)\subseteq (L^{r+1}(\Lambda))^*,
\end{equation*}
and the following stochastic partial differential equation:
\begin{equation}\label{example2}
dX(t)=\De(|X(t)|^{r-1}X_t)dt+dL(t),\quad X(0)=x\in H,
\end{equation}
where $r\in(0,1)$ for $d = 1, 2$ and $r \in [ \frac{d-2}{d+2}, 1)$ for $d \geq 3$.

\end{exmp}

 Notice that when  $\Lambda$ is bounded and $r>\max \{0,\frac{d-2}{d+2}\}$, the embedding $L^{r+1}(\Lambda)\subseteq W^{-1,2}(\Lambda)$ is compact. Therefore, applying Proposition \ref{unique} to Eq. (\ref{example2}), if
let $r\in(0,1)$ for $d = 1, 2$ and $r \in [ \frac{d-2}{d+2}, 1)$ for $d \geq 3$, and that $\nu$ is symmetric,
then there exists at most one invariant measure to (\ref{example2}).
Moreover, if $\Lambda$ is bounded, let $r\in(0\vee \frac{d-2}{d+2},1)$ and (\ref{yang2}) holds, then there exists a unique invariant measure to (\ref{example2}).

To prove the above two examples, according to Proposition \ref{unique}, we need to show Conditions (H1), (H3), (H4), (H6), and (H7) hold. To do so,
we refer to  the proof of  Propositions 3.2 and 3.4 in \cite{Liu}.

 \vskip 0.3cm
 \begin{rmk}
 \begin{itemize}
   \item For the cases of (\ref{example1}) with $p\geq 2$ and  (\ref{example2}) with $r\geq 1$, the following strong dissipativity condition holds: there exist $\alpha\geq2$ and $\delta>0$ such that
\begin{equation}\label{strong dissipativity}
  2{  }_{V^*}\langle \cA(v_1)-\cA(v_2), v_1-v_2\rangle_V
     \le -\delta \|v_1-v_2\|_V^{\alpha}, \ v_1,v_2\in V.
\end{equation}
   The existence and uniqueness of the invariant measure for these cases are much easier than the cases of (\ref{example1}) with $p< 2$ and  (\ref{example2}) with $r< 1$. We refer to \cite{MZ} and \cite{Aronson} for more details.

      \item It seems quite difficult to get Examples 3.1 and 3.2  with other methods due to the lack of strong dissipativity of the equations.

 \end{itemize}
 \end{rmk}

\noindent{\bf Acknowledgement}. This work is partially  supported by NSFC (No. 12131019, 11971456, 11721101).

\end{document}